\newcommand{\N}{\mathbb N}
\newcommand{\R}{\mathbb R}
\newcommand{\E}{\mathbb E}
\newcommand{\pr}{\mathbb P}
\newcommand{\half}{\frac{1}{2}}
\newcommand{\salj}{\mathcal{F}}
\newcommand{\Var}{\mathop{\rm Var}\nolimits}
\newenvironment{centre}{\begin{center}}{\end{center}}
\begin{document}
\theoremstyle{plain}
\newtheorem{thm}{Theorem}
\newtheorem{lem}[thm]{Lemma}
\newtheorem{cor}[thm]{Corollary}
\newtheorem{prop}[thm]{Proposition}

\title{The connected component of the partial duplication graph}
\author{Jonathan Jordan}
\maketitle

\begin{abstract}We consider the connected component of the partial duplication model for a random graph, a model which was introduced by Bhan, Galas and Dewey as a model for gene expression networks.  The most rigorous results are due to Hermann and Pfaffelhuber, who show a phase transition between a subcritical case where in the limit almost all vertices are isolated and a supercritical case where the proportion of the vertices which are connected is bounded away from zero.

We study the connected component in the subcritical case, and show that, when the duplication parameter $p<e^{-1}$, the degree distribution of the connected component has a limit, which we can describe in terms of the stationary distribution of a certain Markov chain and which follows an approximately power law tail, with the power law index predicted by Ispolatov, Krapivsky and Yuryev.  Our methods involve analysing the quasi-stationary distribution of a certain continuous time Markov chain associated with the evolution of the graph.\end{abstract}

\section{Introduction}

The partial duplication model is a model for a growing random graph introduced by Bhan, Galas and Dewey \cite{bhan} as a model for gene expression networks, and further studied by Chung, Lu, Dewey and Galas \cite{chungetal}, Bebek et al \cite{cooperdup}, Ispolatov, Krapivsky and Yuryev \cite{ispolatov}, modelling protein-protein interaction networks, Li, Choi and Wu \cite{lcw} and Hermann and Pfaffelhuber \cite{pfaffel}.  The model is that the graph evolves in discrete time and that at each time point, a single vertex is chosen uniformly at random to ``duplicate''.  This means that a new vertex, which we can think of as an offspring or mutant of the chosen vertex, is added to the graph, and is connected to the neighbours of the chosen vertex, each with probability $p$ (independently of each other) where $p\in (0,1]$ is a parameter of the model.  Note that in our model the new vertex is not connected to the vertex it was duplicated from.  The case where $p=1$ is referred to as \emph{full duplication} and has some special properties, while the cases where $p<1$ are referred to as \emph{partial duplication}.

In this model, it is clear that if a vertex $v$ has degree zero then it will continue to do so for all time, and furthermore that any vertex duplicated from $v$ will also have degree zero.  This suggests the possibility that if $p$ is small enough then in the limit almost all vertices will have degree zero.  Hermann and Pfaffelhuber \cite{pfaffel} show that this situation occurs if $p\leq p_c$, where $p_c$ is the unique root of $pe^p=1$, while if $p>p_c$ there is no non-defective limiting degree distribution.  They also obtain a number of results concerning the asymptotics of the numbers of cliques and stars of different sizes in the graph.

In the case where almost all vertices have degree zero, a natural question is to consider the degree distribution of the connected component of the graph, assuming that the initial graph is connected.  This was explored by Ispolatov, Krapivsky and Yuryev \cite{ispolatov} using non-rigorous methods, suggesting a power-law distribution for the degrees with index given by the solution to $-3+\beta+p^{\beta-2}=0$ when $p<e^{-1}$, and index $-2$ when $e^{-1}\leq p<\frac12$; it is also considered in Section 2 of Hermann and Pfaffelhuber \cite{pfaffel}, where the conjecture that the connected component satisfies a power law degree distribution is mentioned.

The aim of this paper is to discuss the behaviour of degrees in this connected component in more detail, using a method involving a quasi-stationary distribution of a certain continuous time Markov chain.  We will show that, for $p<e^{-1}$, the expected number of vertices of a particular degree, when normalised appropriately, converges to a non-degenerate limit and that the degree distribution of the connected component converges in probability to this distribution.  We can describe this limit in terms of the stationary distribution of a related Markov chain, and we will also show that this distribution has tail behaviour close to that of a power law of the index suggested in \cite{ispolatov}.  Our proofs have some similarity with the discrete time Markov chain methods used in Jordan \cite{randrep} for a different model.

It is observed non-rigorously in \cite{ispolatov} that considering the behaviour of the connected component and letting $p\to 0$ gives the preferential attachment mechanism of Barab\'{a}si and Albert \cite{scalefree1999}, and as is well-known (first rigorously proved by Bollob\'{a}s, Riordan, Spencer and Tusn\'{a}dy \cite{brst}) that model gives a degree distribution which is asympotically a power law with tail index $-3$.  We will see that the tail indices of the distributions in our model converge to $-3$ as $p\to 0$.

As an illustration of the sort of graphs which the model produces and how the density of edges increases with $p$, simulations of the model with $500$ vertices in the connected component and three values of $p$, each starting from a ring of five vertices, are displayed in Figure \ref{fig}.
\begin{figure}
\begin{centre}
\begin{tabular}{ccc}\scalebox{0.35}{{\includegraphics{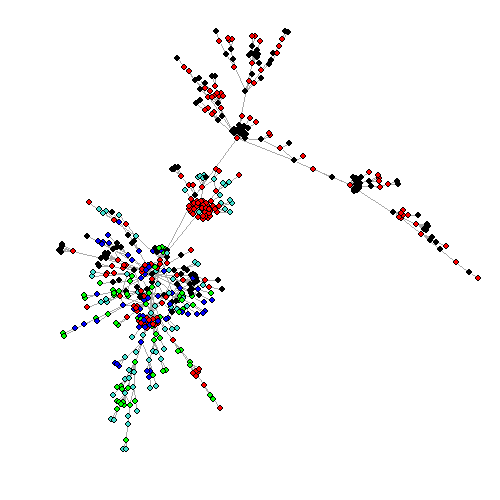}}} & \scalebox{0.35}{{\includegraphics{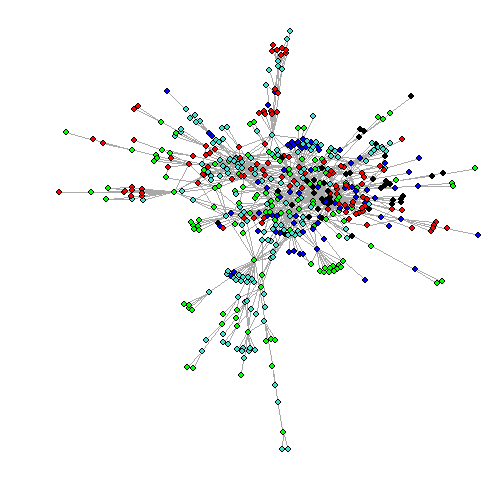}}} & \scalebox{0.35}{{\includegraphics{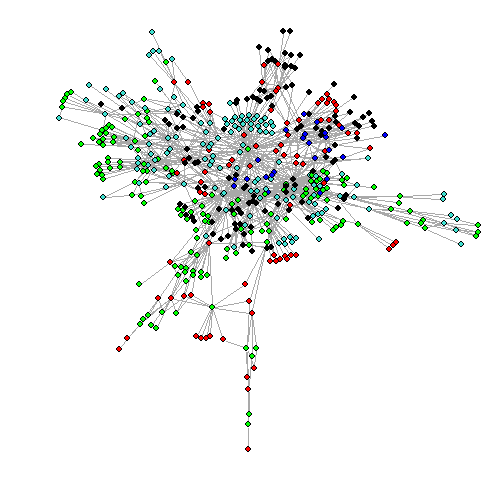}}}\end{tabular}
\caption{Example simulations of the connected component, with $500$ vertices and $p=0.05$, $0.2$ and $0.36$ (left to right), each starting from a ring of five vertices.  The colours of the vertices reflect which of the five ancestor vertices they are descended from. \label{fig}}
\end{centre}
\end{figure}

\subsection{Other duplication models}

Although the growth to $1$ of the proportion of degree zero vertices can be seen as a natural feature of the model, with these vertices reflecting unsuccessful mutants which have lost all their interactions, we note that there are also variants of the duplication model which avoid it.  One idea is for the new vertex to additionally connect to vertices which were not neighbours of its parent with some small probability; this is considered by Pastor-Satorras, Smith and Sol\'{e} \cite{pastorsatorras}, and is also studied by Bebek et al \cite{Bebek2006}, Kim, Krapivsky, Kahng and Redner \cite{kimdup} and Raval \cite{raval}.  These extra edges can be seen as due to mutations causing the new vertex to interact with vertices which its parent did not.

Another idea, which is considered in chapter 4 of Chung and Lu \cite{chunglu}, is to always maintain a connected graph (assuming that the initial graph is connected) by the new vertex always connecting to the vertex it was duplicated from.  This model appears to have been rediscovered by Li, Chen, Cheng and Wang \cite{li2015analysis} where it is suggested as a model for social networks, a context where the connection to the parent vertex is natural. The results in \cite{chunglu} suggest that for $p<p_c$ (the same $p_c$ as for our model) the expected degree distribution converges to a limit which has a power law type tail, with index depending on $p$, but not the same index as in our results.

A different family of duplication graph models is introduced by Backhausz and M\'{o}ri \cite{backhausz1}, and extended by Th\"{o}rnblad \cite{thoernblad}.  In the models of \cite{backhausz1}, two vertices are selected at each time step.  One is duplicated with full duplication, so that all its edges become edges of the new vertex, and one has its edges deleted (but is not deleted itself).  For these models, \cite{backhausz1} shows almost sure convergence to a particular degree distribution, which has a stretched exponential tail.  In the extended model of Th\"{o}rnblad \cite{thoernblad}, also studied by Backhausz and M\'{o}ri \cite{backhausz2}, at each time step a single vertex is chosen, and duplicated with probability $\theta$ and its edges deleted with probability $1-\theta$.  For this model, \cite{thoernblad} shows almost sure convergence to a degree distribution which has a phase transition from exponential to power law decay at $\theta=1/2$.  At $\theta=1/2$ itself the behaviour is like that of the model in \cite{backhausz1}.  The analyses in these papers rely on the clique structure of the graph, which is associated with the full duplication.

We briefly mention two more extensions.  Hamdi, Krishnamurthy and Yin \cite{hamdi}, also motivated by social networks, introduce a variant where the probabilities that a vertex is deleted and that when a duplication step takes place that the new vertex connects to each neighbour of its parent are dependent on the state of an underlying Markov chain.  Finally, a model where the duplication probabilities are proportional to the degree instead of uniform is considered in Cohen, Jordan and Voliotis \cite{prefdup}, but rigorous results are only obtained for the case of full duplication.

\section{Definitions and results}

The model we consider can be defined in discrete time as in Hermann and Pfaffelhuber \cite{pfaffel}.  We define a parameter $p\in (0,1]$.  We start at time $n_0$ with an undirected graph $G_{n_0}$, which has $n_0>1$ vertices, labelled $1,2,\ldots,n_0$, and which we assume to be connected.  For $n \geq n_0$, and given $G_n$, which has $n$ vertices, we form $G_{n+1}$ by picking a random vertex $u$, and adding a new vertex (which we will label as $n+1$) which is connected to each neighbour of $u$ with probability $p$, independently of each other, and to no other vertices.

Let $U_n$ be the degree of a vertex chosen uniformly at random from the graph at time $n$.  We consider the distribution of $U_n$ without conditioning on the graph, and show the following result.

\begin{thm}\label{main}Assume $0<p<e^{-1}$.  \begin{enumerate} \item For each $k\geq 1$, there exists $a_k$ such that $\lim_{n\to\infty}\pr(U_n=k|U_n\neq 0)=a_k$, and furthermore $\lim_{n\to\infty}\pr(U_n=k|G_m,U_n\neq 0)=a_k$ for any $m$. \item The proportion of vertices of the connected component of $G_n$ which have degree $k$ converges to $a_k$ as $n\to\infty$, in probability. \item Let $\beta(p)$ be the solution to $-3+\beta+p^{\beta-2}=0$.  Then the tail behaviour of $a_k$ is close to a power law of index $-\beta(p)$, in the sense that as $k\to\infty$, $a_k/k^{-q}\to 0$ if $q<\beta(p)$ and $a_k/k^{-q}\to \infty$ if $q>\beta(p)$.\end{enumerate}
\end{thm}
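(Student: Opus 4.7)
The plan is to associate with the duplication dynamics a continuous-time Markov chain $(X_t)$ on $\{0,1,2,\ldots\}$ that tracks the degree of a typical vertex in the log-time scale $\tau=\log n$.  From state $k\geq 1$ the chain jumps to $k+1$ at rate $pk$ (a neighbour of our vertex is chosen for duplication and the new vertex links to us with probability $p$), and at total rate $1$ it is reset to a state $Y\sim\mathrm{Bin}(k,p)$ (the vertex itself is chosen, and we follow its offspring); state $0$ is absorbing.  The constants $a_k$ will be the unique quasi-stationary distribution (QSD) of this chain.  Separately and elementarily, the edge count $E_n$ satisfies $\E[E_{n+1}]=(1+2p/n)\E[E_n]$, since each duplication adds $\mathrm{Bin}(D_u,p)$ edges and $\sum_v D_v=2E_n$.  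Hence $\E[E_n]\sim c n^{2p}$, and with finiteness of the mean degree in the connected component (valid for $p<e^{-1}$, confirmed a posteriori from the tail analysis), $\E[N_n]\sim c'n^{2p}$ where $N_n$ is the connected-component size.  This pins down $\alpha:=1-\lambda=2p$ where $\lambda$ is the absorption rate of the QSD.

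\textbf{Parts (a) and (b).}  Writing $D_n(k)$ for the number of vertices of degree $k$ at time $n$, the recursion for $c_k(n):=\E[D_n(k)]$ obtained directly from the duplication rule is the mean-field equation of the multi-type branching process associated with $(X_t)$.  Eigenvalue analysis of the corresponding infinite linear operator (equivalently a generating-function calculation), combined with the growth rate $2p$, yields $n^{-2p}c_k(n)\to Ca_k$, with $C$ depending only on $G_{n_0}$.  Part (a) then follows because $\pr(U_n=k\mid U_n\neq 0)=c_k(n)/\sum_j c_j(n)$ kills $C$; uniqueness of the QSD (the branching process forgets its initial data in log-time) handles the stronger conditioning on $G_m$.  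For (b), I would upgrade to convergence in probability by a second-moment or martingale argument: a Doob decomposition of $D_n(k)$ has increments controlled by the degree of the duplicated vertex, which is typically $O(1)$, yielding $\Var(D_n(k))=o(\E[N_n]^2)$ and hence $D_n(k)/N_n\to a_k$ in probability.

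\textbf{Part (c).}  The QSD equation, after collecting terms, reads
\[(1-\lambda+pk-p^k)a_k=p(k-1)a_{k-1}+\sum_{d>k}\binom{d}{k}p^k(1-p)^{d-k}a_d.\]
Substituting the ansatz $a_k\asymp k^{-\beta}$, the $k^{1-\beta}$ terms on both sides cancel; the $k^{-\beta}$ balance requires the convolution, which, since the (negative-)binomial kernel is concentrated near $d\approx k/p$ with width $\sim\sqrt{k}/p$, contributes $p^{\beta-1}k^{-\beta}(1+o(1))$.  Combined with $1-\lambda=2p$ the balance reads $2p=p(\beta-1)+p^{\beta-1}$, equivalent to $\beta+p^{\beta-2}=3$.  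To turn this into the one-sided tail comparisons of the theorem, I would either bootstrap on the recursion, feeding candidate bounds $a_k\leq Ck^{-q}$ (for $q<\beta$) or $a_k\geq ck^{-q}$ (for $q>\beta$) and iterating to show that only the asserted tail behaviour is self-consistent, or analyse the singularity at $s=1$ of the functional--differential equation $ps(1-s)A'(s)=A(ps+1-p)-A(1-p)-(1-\lambda)A(s)$ satisfied by $A(s)=\sum_k a_ks^k$.  I expect this tail analysis to be the principal technical obstacle, because the convolution couples $a_k$ to $a_d$ for $d$ of a strictly larger order of magnitude and standard Tauberian arguments do not apply straightforwardly.
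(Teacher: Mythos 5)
Your skeleton matches the paper's: the same degree chain (up-rate $pk$, reset to $\mathrm{Bin}(k,p)$ at rate $1$, $0$ absorbing), the same identification $\lambda=1-2p$ from the edge-growth rate $n^{2p}$, and the same index equation $\beta+p^{\beta-2}=3$. But each of the three parts has a genuine gap. For part (a), you assert that the $a_k$ form \emph{the unique} QSD and that the conditioned law converges to it because the process ``forgets its initial data''. Neither is available: absorbed chains on infinite state spaces typically carry a whole one-parameter family of quasi-stationary distributions, and convergence of $\pr(D_t=k\mid D_t\geq 1)$ to a particular one is exactly the hard point, not a consequence of uniqueness. The mechanism your sketch lacks is the paper's use of Pollett's $\lambda$-dual: the invariant vector $x_k=k$ yields the exact identity $\pr(D_t=k\mid D_0=j)=\frac{j}{k}e^{-(1-2p)t}\pr(X_t=k\mid X_0=j)$, where $X$ is the dual chain $\bar{Q}$, reducing quasi-stationary convergence to ordinary ergodic convergence of a positive recurrent chain; positive recurrence of $\bar{Q}$ is then proved by a Foster--Lyapunov drift argument that holds precisely when $p<e^{-1}$ (the dual is transient for $p>e^{-1}$). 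Note that nothing in your outline for (a)--(b) uses the hypothesis $p<e^{-1}$, which is a warning sign that the convergence step is not actually being done.

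Part (b) contains an outright error: the claimed bound $\Var(D_n(k))=o(\E[N_n]^2)$ is false. In the continuous-time embedding the normalised counts $e^{-2pt}N_{t,k}$ converge to $2m_kW$, where $W$ is the nondegenerate martingale limit of $e^{-2pt}E_t$ (Hermann--Pfaffelhuber), so $\Var(D_n(k))$ is of the \emph{same} order as $(\E[N_n])^2$; the degree-$k$ count does not concentrate around its mean, and no Doob-decomposition variance bound can make it do so. The ratio $D_n(k)/N_n$ concentrates only because numerator and denominator are asymptotically proportional to the same random $W$, and proving that is the substance of the paper's Section 5: decompose $\Gamma_t$ into subgraphs descended from the edges of $\Gamma_s$, control the covariance between overlapping descendants by the 2-star count $\asymp e^{(2p+p^2)s}=o(e^{4ps})$, show the double-counting correction $\E|\hat{N}_{t,k}-N_{t,k}|=o(e^{2pt})$ via the quasi-stationary decay rate $e^{-(1-2p)t}$, and use $\pr(W=0)=0$. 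For part (c), your balance heuristic recovers the correct equation, but you explicitly leave the one-sided tail comparisons as ``the principal technical obstacle''; the paper sidesteps the bootstrap/Tauberian difficulties you anticipate by running Foster--Lyapunov drift computations on $\bar{Q}$ with test functions $x^q$ and $x^q(\log(x+1))^{-r}$ (Meyn--Tweedie for positive results, Menshikov--Popov--Wade for negative ones), converting the tail question into existence or non-existence of $q$th moments of the stationary distribution of $\bar{Q}$, which transfer to $a$ via $a_k\propto u_k/k$. As it stands, the proposal sets up the right objects but proves none of the three parts.
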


In Section \ref{qs}, we will show how to derive the distribution given by the $a_k$ as a quasi-stationary distribution of a certain continuous time Markov chain, and we will use Foster-Lyapunov methods to get indications of the tail behaviour, which will give part (c) of  Theorem \ref{main}.   We will then complete the proof of part (a) in Section \ref{proofmain1}, and the proof of part (b) in Section \ref{proofmain2}.

We will make frequent use of the following embedding of our model in continuous time.  We start at time zero with a fixed connected graph $\Gamma_0$ with $n_0>1$ vertices, and define a continuous time Markov chain $(\Gamma_t)_{t\geq 0}$ on the state space of graphs by saying that each vertex duplicates at times given by a Poisson process of rate 1, independently of everything else, with the rules for the addition of a new vertex when a duplication happens being as before.  We will define $N_t$ to be the number of vertices in $\Gamma_t$, and will maintain the above labelling of the vertices: the vertices of $\Gamma_0$ are labelled $1,2,\ldots,n_0$, and the later vertices are numbered in order of arrival so that the most recent vertex at time $t$ is labelled $N_t$.  We observe that the process $(N_t)_{t\geq 0}$, which gives the number of vertices in the system, follows the well-known Yule process introduced by \cite{yule1925}.  We also note that a different continuous time embedding of the process, with vertices in a graph with $N_t$ vertices duplicating at rate $1+1/N_t$, was used by Hermann and Pfaffelhuber \cite{pfaffel}.

\section{Vertex tracking and the quasi-stationary distribution}\label{qs}

In the continuous time version of our process, we define a \emph{tracked vertex} $(V_t)_{t\geq 0}$ as follows.  We start by choosing $V_0$ uniformly at random from the vertices of $\Gamma_0$, and then say that the process $(V_t)_{t\geq 0}$ will have a jump at time $t$ if and only if the vertex $V_{t-}$ is duplicated at time $t$, in which case it will jump to the new vertex.  Let the degree of $V_t$ be $D_t$; then $(D_t)_{t\geq 0}$ is a continuous time Markov chain on $\N_0$ and from $j$ jumps to $j+1$ when a neighbour of the currently tracked vertex is duplicated and the edge retained (rate $jp$) and to $k<j$ when the currently tracked vertex is duplicated together with $k$ of its edges.  The generator $Q$ of this continuous time Markov chain with state space $\N_0$ is thus given by
\begin{eqnarray*}
q_{j,j+1} &=& jp\\
q_{j,k} &=& \binom{j}{k}p^k(1-p)^{j-k} \text{ for }0\leq k\leq j-1\\
q_{j,j} &=& -(jp+1-p^j).
\end{eqnarray*}
As expected, $0$ is an absorbing state here: if at some time $t$ the tracked vertex has degree zero then this will remain the case at all later times.

We note that the events that $V_t$ is a particular vertex and that the degree of that vertex is $d$ are independent; this is because in the continuous time model the changes in degree of a particular vertex, which are when its neighbours duplicate, are independent of its duplications.

We will be interested in $D_t$, the degree of our tracked vertex, conditional on it not being zero, that is on it being part of the connected component.  To investigate this, we will use the theory of quasi-stationary distributions of Markov chains, for which we will follow Pollett \cite{pollettquasi1988}, which considers quasi-stationary distributions for continuous time Markov chains on countable state spaces.  A quasi-stationary distribution in this context is a left eigenvector of the generator matrix, excluding the row and column corresponding to state 0, which sums to 1 and has all entries non-negative.  The eigenvalue is necessarily negative, and we will write it as $-\lambda$.  Under certain conditions the distribution of the state of the chain conditional on not having hit zero will converge to a quasi-stationary distribution.

A quasi-stationary distribution with eigenvalue $-\lambda$ for a chain with the generator $Q$ will satisfy
\begin{equation}\label{Aqstat}a_{k-1}(k-1)p+
\sum_{j=k+1}^{\infty}a_j\binom{j}{k}p^k(1-p)^{j-k}-a_k(1-\lambda+kp-p^k)=0,\end{equation} for $k\in \N$, from which we obtain $$\sum_{j=k}^{\infty}a_j\binom{j}{k}p^k(1-p)^{j-k}=a_k(1-\lambda+kp)-a_{k-1}(k-1)p.$$

In Section 3 of \cite{pollettquasi1988}, a \emph{$\lambda$-invariant measure} is defined to be a positive left eigenvector $m$ of $Q$ restricted to $\N$ with eigenvalue $-\lambda$, so that a quasi-stationary distribution is a $\lambda$-invariant measure which sums to $1$, and a \emph{$\lambda$-invariant vector} is defined to be a positive right eigenvector $x$ of $Q$ restricted to $\N$ with eigenvalue $\lambda$.

Also in \cite{pollettquasi1988}, given the existence of a $\lambda$-invariant vector and measure, two generator matrices for continuous time Markov chains are defined on (in our context) $\N$.  Given a $\lambda$-invariant measure $m$ for $Q$, the \emph{$\lambda$-reverse} of $Q$ with respect to $m$ is a generator matrix $Q^*$ defined by letting $$q^*_{jk}=m_k(q_{kj}+\lambda\delta_{jk})/m_j,$$ and given a $\lambda$-invariant vector $x$ for $Q$ the \emph{$\lambda$-dual} of $Q$ with respect to $x$ is a generator matrix $\bar{Q}$ defined by letting $$\bar{q}_{jk}=(q_jk+\lambda \delta_{jk})x_k/x_j.$$

The following result suggests that if we are to have a quasi-stationary distribution with finite mean we should expect $\lambda=1-2p$.
\begin{prop}Assume that $a$ is a quasi-stationary distribution of $p$ with eigenvalue $-\lambda$, and that $a$ has a finite mean.  Then $\lambda=1-2p$.\end{prop}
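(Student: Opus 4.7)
The key observation is that the identity function $f(k)=k$ is a right eigenvector of $Q$ (restricted to $\N$) with eigenvalue $2p-1$: using the binomial mean identity $\sum_{k=0}^j k\binom{j}{k}p^k(1-p)^{j-k}=jp$, one computes
\begin{equation*}
(Qf)(j) = jp(j+1) - j(jp+1-p^j) + (jp-jp^j) = (2p-1)j.
\end{equation*}
If one could freely pair the left eigenvector $a$ with this right eigenvector, the formal computation $-\lambda\mu = \sum_k k(aQ)_k = \sum_j a_j(Qf)(j) = (2p-1)\mu$, with $\mu = \sum_k k a_k$, would immediately yield $\lambda = 1-2p$.

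The subtle point is that this Fubini-type interchange, carried out naively, involves terms of order $\sum k^2 a_k$ on both sides, which need not be finite under the finite-mean hypothesis alone. My plan is to rigorise the argument by truncation: multiply the quasi-stationary equation (\ref{Aqstat}) by $k$ and sum for $1\leq k\leq N$, which is a finite rearrangement and so an exact identity. After shifting the index in the first sum, applying Tonelli to interchange the finite outer sum with the infinite inner sum in the second, and splitting the resulting sum over $j$ into the contributions from $j\leq N+1$ (where the binomial identity gives the closed form $jp-jp^j$) and from $j>N+1$ (bounded by $p\sum_{j>N} ja_j$, which tends to $0$ by finite mean), the dangerous second-moment pieces coming from the first term ($p\sum_{\ell\leq N-1}\ell^2 a_\ell$) and from the diagonal in the third term ($p\sum_{k\leq N} k^2 a_k$) telescope down to a single boundary term $-pN^2 a_N$.

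Taking $N\to\infty$ in the resulting identity shows that $pN^2 a_N$ converges to a finite limit $L = (2p+\lambda-1)\mu$. Since $a_N\geq 0$ we have $L\geq 0$; but if $L>0$ then $a_N$ would be asymptotically of order $1/N^2$, forcing $\sum_N N a_N = \infty$ and contradicting the finite-mean hypothesis. Hence $L = 0$ and $\lambda = 1-2p$.

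The main obstacle is the truncation bookkeeping: verifying precisely that the cancellation of the $k^2$-contributions leaves exactly $-pN^2 a_N$ (and no hidden non-vanishing correction), and that all remaining sums, including the tail of the double sum, converge as $N\to\infty$. Once this is in place the \emph{reductio} at the final step is immediate, the finite-mean hypothesis forcing the boundary term to vanish and thereby pinning down $\lambda$.
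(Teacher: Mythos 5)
Your proof is correct, but it takes a genuinely different route from the paper's. The paper argues via generating functions: it sets $F(z)=\sum_j a_j z^j$, derives from \eqref{Aqstat} the functional equation $F(pz+1-p)=(1-\lambda)F(z)+p(z-z^2)F'(z)+F(1-p)$, reads off $F(1-p)=\lambda$ at $z=1$ (the decay rate equals the absorption rate under the QSD, since $q_{j,0}=(1-p)^j$), and then lets $z\uparrow 1$ in the quotient for $F'(z)$ to conclude $\lambda=1-2p$, the finite-mean hypothesis entering through the existence of $F'(1)$. You instead pair the left eigenvector $a$ directly against the right eigenvector $f(k)=k$ --- your computation $(Qf)(j)=(2p-1)j$ is sound, and this vector is precisely the $(1-2p)$-invariant vector that the paper only identifies afterwards, in Lemma \ref{invariant} --- and you rigorise the formally second-moment-divergent pairing by truncation. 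Your bookkeeping checks out: multiplying \eqref{Aqstat} by $k$ and summing over $1\leq k\leq N$, the pieces $p\sum_{\ell\leq N-1}\ell^2 a_\ell$ (from the shifted first term) and $-p\sum_{k\leq N}k^2a_k$ (from the diagonal) indeed cancel to the single boundary term $-pN^2a_N$; the Tonelli interchange is legitimate by non-negativity; the tail of the double sum is at most $p\sum_{j>N+1}ja_j\to 0$ by finite mean; the $jp^j$ and $kp^k$ sums contribute absolutely convergent quantities that nearly cancel; so $pN^2a_N\to L=(2p+\lambda-1)\mu$, and your reductio is valid, since $L>0$ would give $a_N\sim L/(pN^2)$ and hence $\sum_N Na_N=\infty$, while $\mu\geq 1>0$ then forces $\lambda=1-2p$. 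Comparing the two: the paper's argument is shorter and yields the extra identity $F(1-p)=\lambda$, though as written it is slightly informal at the limit $z\uparrow 1$ (``again assuming the limit exists''); yours is more elementary, fully rigorous about the interchange, makes explicit the duality with $x_k=k$ that drives all of Section \ref{qs}, and delivers the small bonus $a_N=o(N^{-2})$ as a byproduct of $L=0$.
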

\begin{proof}We follow the ideas in Chapter 4 of Chung and Lu \cite{chunglu} for a related model and work with the generating function of the distribution $a$, $F(z)=\sum_{j=1}^{\infty}a_jz^j$.  Note that (setting $a_0=0$) \begin{eqnarray*} F(pz+1-p) &=& \sum_{k=0}^{\infty}z^k \sum_{j=k}^{\infty}a_j\binom{j}{k}p^k(1-p)^{j-k} \\ &=& \sum_{k=1}^{\infty}z^k \sum_{j=k}^{\infty}a_j\binom{j}{k}p^k(1-p)^{j-k} +\sum_{j=1}^{\infty}a_j (1-p)^j \\ &=& \sum_{k=1}^{\infty}\left(a_k(1-\lambda)z^k+kpa_k z^k-(k-1)a_{k-1}pz^k\right)+F(1-p).\end{eqnarray*}  Hence we get \begin{equation}\label{Aqstatgf}F(pz+1-p)=(1-\lambda)F(z)+p(z-z^2)F'(z)+F(1-p).\end{equation}
Considering $F(1)=1$, this gives $F(1-p)=\lambda$, as we assume a finite mean.  We can also see that $$F'(z)=\frac{F(pz+1-p)-(1-\lambda)F(z)-F(1-p)}{p(z-z^2)},$$ and taking limits as $z\uparrow 1$, again assuming the limit exists, we get $F'(1)=\frac{1}{-p}F'(1)(p-(1-\lambda))$ and hence $\lambda=1-2p$.
\end{proof}

It turns out that in our setting it is easy to identify a $(1-2p)$-invariant vector.
\begin{lem}\label{invariant}Let $p<\half$.  A $(1-2p)$-invariant vector for $Q$ is given by $x_k=k$, and this is unique up to a multiplicative constant.\end{lem}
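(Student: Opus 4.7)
The plan is to verify the claim by direct substitution into the right-eigenvector equation, and then deduce uniqueness from the upper-Hessenberg structure of $Q$ restricted to $\N$.

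For existence, the right-eigenvector equation (which, given the sign convention forced by the $\lambda$-dual formula $\bar q_{jk}=(q_{jk}+\lambda\delta_{jk})x_k/x_j$, reads $(Qx)_j=-(1-2p)\,x_j$) becomes, with $x_k=k$,
\[ jp(j+1) \;-\; (jp + 1 - p^j)\,j \;+\; \sum_{k=1}^{j-1} k \binom{j}{k} p^k (1-p)^{j-k} \;=\; -(1-2p)\,j. \]
I would evaluate the sum using the mean of a $\mathrm{Binomial}(j,p)$, namely $\sum_{k=0}^{j} k \binom{j}{k} p^k (1-p)^{j-k} = jp$; the boundary terms $k=0$ and $k=j$ contribute $0$ and $jp^j$, so the truncated sum equals $jp - jp^j$.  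Substituting, the $j^{2}p$ and $jp^{j}$ contributions cancel and one is left with $2jp - j = -(1-2p)\,j$ on the left-hand side, matching the right-hand side.

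For uniqueness, the key observation is that on $\N$ the matrix $Q$ is upper Hessenberg: the only super-diagonal entries are $q_{j,j+1}=jp$, which are strictly positive when $p>0$ and $j\geq 1$.  Rewriting the eigenvector equation at row $j$ gives the forward recurrence
\[ x_{j+1} = \frac{1}{jp}\Bigl[(jp + 1 - p^j - (1-2p))\,x_j \;-\; \sum_{k=1}^{j-1}\binom{j}{k} p^k (1-p)^{j-k}\, x_k\Bigr], \]
which expresses $x_{j+1}$ solely in terms of $x_1,\ldots,x_j$.  Hence every $(1-2p)$-invariant vector is completely determined by its value at $1$, and the one-dimensional eigenspace is spanned by the vector we have constructed.

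The computation is entirely routine and there is no real obstacle; the only thing to keep in mind is that restricting $Q$ to $\N$ by deleting the row and column for the absorbing state $0$ breaks the row-sum-zero property of $Q$ (the rows of the restriction sum to $-(1-p)^j$), which is exactly what permits a non-zero eigenvalue, and that the binomial mean identity is the single ingredient which makes all the $j^{2}$-order and $p^{j}$-order terms cancel.
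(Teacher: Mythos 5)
Your proof is correct and takes essentially the same route as the paper: the paper derives the same forward recurrence $jpx_{j+1}=(j+2)px_j-\sum_{k=1}^{j}\binom{j}{k}p^k(1-p)^{j-k}x_k$ and solves it inductively from $x_1=r$ to obtain $x_k=rk$, which is precisely your direct verification via the binomial mean identity combined with your Hessenberg uniqueness observation. You also correctly pin down the sign convention $(Qx)_j=-(1-2p)x_j$, which is indeed what the paper's computation (eigenvalue $2p-1$) uses despite the ambiguous wording in its definition of a $\lambda$-invariant vector.
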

\begin{proof}The equations for a $(1-2p)$-invariant vector for $Q$ are, for $j\geq 1$, $$jp x_{j+1}-(jp+1)x_j+\sum_{k=1}^j \binom{j}{k} p^k (1-p)^{j-k} x_k=(2p-1)x_j,$$ giving $$jpx_{j+1}=(2+j)px_j-\sum_{k=1}^j \binom{j}{k} p^k (1-p)^{j-k} x_k,$$ and if we set $x_1=r$ then solving the equations inductively gives $x_k=rk$.
\end{proof}

Using Lemma \ref{invariant}, the $(1-2p)$-dual of $Q$, $\bar{Q}$, with respect to $x$ is given by $$\bar{q}_{jk}=\left\{\begin{array}{lr}\binom{j-1}{k-1}p^k(1-p)^{j-k} & 1<k<j \\ (j+1)p & k=j+1 \\ p^j-(2+j)p & k=j.\end{array}\right.$$  We can now use this to identify our quasi-stationary distribution.

\begin{prop}If $\bar{Q}$ defines a positive recurrent Markov chain, then there exists a quasi-stationary distribution $a$ with eigenvalue $-(1-2p)$ for $Q$.\end{prop}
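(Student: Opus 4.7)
The plan is to exploit the duality between $Q$ and $\bar{Q}$ (constructed via the $(1-2p)$-invariant vector $x_k=k$ from Lemma~\ref{invariant}) in order to turn a stationary distribution of $\bar{Q}$ into a $(1-2p)$-invariant measure for $Q$. By hypothesis, $\bar{Q}$ is positive recurrent on $\N$, hence admits a unique stationary probability distribution $\pi$. The candidate quasi-stationary distribution will then be a normalisation of $m_k := \pi_k/x_k = \pi_k/k$.

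To verify that $m$ is indeed a $(1-2p)$-invariant measure for $Q$, I would start from $\pi\bar{Q}=0$: for each $k\in\N$,
\begin{equation*}
0 \;=\; \sum_{j\in\N} \pi_j\, \bar{q}_{jk} \;=\; x_k \sum_{j\in\N} \frac{\pi_j}{x_j}\bigl(q_{jk}+(1-2p)\delta_{jk}\bigr),
\end{equation*}
by the definition of the dual. Dividing through by $x_k>0$ and recognising $m_j = \pi_j/x_j$ yields $mQ = -(1-2p)m$. Since $x_k=k\geq 1$, we have $\sum_k m_k \leq \sum_k \pi_k = 1<\infty$, so $m$ is summable, and $a_k := m_k/\sum_j m_j$ is a probability measure on $\N$ with non-negative entries still satisfying $aQ=-(1-2p)a$; by the definitions recalled from \cite{pollettquasi1988}, this is exactly a quasi-stationary distribution with eigenvalue $-(1-2p)$.

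The only technical point requiring attention is the absolute convergence of the infinite sums above. For fixed $k$ the dual rates $\bar{q}_{jk}=\binom{j-1}{k-1}p^k(1-p)^{j-k}$ are uniformly bounded in $j$ (they decay exponentially for large $j$), so $\sum_j \pi_j|\bar{q}_{jk}|<\infty$; the same estimate controls $\sum_j m_j |q_{jk}|$, which legitimises the rearrangement. Within this proposition itself, then, there is no serious obstacle: the result is essentially a direct reading of Pollett's duality once the invariant vector $x_k=k$ has been identified. The genuinely hard step is verifying the hypothesis, namely establishing positive recurrence of $\bar{Q}$, and it is there that one should expect the Foster--Lyapunov methods promised in the introduction to do the real work.
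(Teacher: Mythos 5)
Your proposal is correct and takes essentially the same route as the paper: both convert the stationary distribution of $\bar{Q}$ into the measure $m_k=\pi_k/x_k=\pi_k/k$ via the invariant vector from Lemma \ref{invariant}, note summability since $x_k\geq 1$, and normalise to obtain the quasi-stationary distribution. The only difference is cosmetic --- you verify $mQ=-(1-2p)m$ directly by unwinding the definition of the dual (with a sensible check of absolute convergence), whereas the paper cites the corresponding duality fact, that $\bar{Q}$ and $Q^*$ share the stationary measure $u_j=m_jx_j$, from \cite{pollettquasi1988}.
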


\begin{proof}
First of all, it is clear that in our setting both $\bar{Q}$ and $Q^*$ are irreducible.  From \cite{pollettquasi1988}, both $Q^*$ and $\bar{Q}$ have the same stationary measure, given by $u$ with $u_j=m_jx_j$.  As we know $x_k=k$ for our $Q$ and $\lambda=1-2p$, we can thus get a quasi-stationary distribution of $Q$ with $\lambda=1-2p$ by defining $u$ to be the unique stationary distribution for $\bar{Q}$, letting $m_k=u_k/k$ and then normalising so that $a_j=m_j/\sum_{i=1}^{\infty}m_i$.
\end{proof}

We note that this cannot give a quasi-stationary distribution with an infinite mean, as then $u$ would not give a probability distribution.

This now allows us to use results on convergence to quasi-stationary distributions to show that the distribution of the degree of our tracked vertex converges.
\begin{prop}\label{converge}If $\bar{Q}$ defines a positive recurrent Markov chain, then for any $j$ and $k$ we have that $$\lim_{t\to\infty} \pr(D_t \geq 1|D_0=j)e^{(1-2p)t}=jm_k,$$ and furthermore that $$\pr(D_t=k|D_t\geq 1) \to a_k \text{ as }t \to \infty.$$\end{prop}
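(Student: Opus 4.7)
The plan is to exploit the $\lambda$-duality between $Q$ restricted to $\N$ and $\bar{Q}$. With $D=\mathrm{diag}(1,2,3,\ldots)$ the diagonal matrix whose entries are the $(1-2p)$-invariant vector $x_k=k$ from Lemma~\ref{invariant}, the definition $\bar{q}_{jk}=(q_{jk}+\lambda\delta_{jk})x_k/x_j$ amounts to the similarity relation $\bar{Q}=D^{-1}(Q|_{\N}+(1-2p)I)D$. Exponentiating yields $e^{t\bar{Q}}=e^{(1-2p)t}D^{-1}e^{tQ|_{\N}}D$, which entry-wise is
$$\bar{p}_{jk}(t)=\frac{k}{j}\,e^{(1-2p)t}\,p_{jk}(t),$$
where $p_{jk}(t)$ and $\bar{p}_{jk}(t)$ denote the restricted transition semigroups of $Q$ and $\bar{Q}$. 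Rearranging this identity turns all questions about $p_{jk}(t)$ into questions about $\bar{p}_{jk}(t)$, and is the key conceptual step.

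I would then invoke the assumed positive recurrence of $\bar{Q}$: since $\bar{Q}$ is irreducible (as noted in the previous proof) and positive recurrent, the ergodic theorem for continuous-time Markov chains gives $\bar{p}_{jk}(t)\to u_k$ as $t\to\infty$ for each fixed $j,k$, where $u$ is the stationary distribution. Combined with the duality identity, this immediately yields
$$e^{(1-2p)t}\,p_{jk}(t)=\frac{j}{k}\,\bar{p}_{jk}(t)\longrightarrow \frac{j\,u_k}{k}=j m_k,$$
which gives the first displayed limit in the proposition (read as $\pr(D_t=k\mid D_0=j)e^{(1-2p)t}\to jm_k$, since the right-hand side depends on $k$).

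For the conditional convergence, I would write
$$\pr(D_t=k\mid D_t\geq 1,\,D_0=j)=\frac{e^{(1-2p)t}p_{jk}(t)}{\sum_{i\geq 1}e^{(1-2p)t}p_{ji}(t)}=\frac{(j/k)\bar{p}_{jk}(t)}{j\sum_{i\geq 1}\bar{p}_{ji}(t)/i}.$$
The numerator is controlled by the previous step, so the main obstacle is interchanging limit and infinite sum in the denominator. I would resolve this by observing that the function $f(i)=1/i$ is bounded by $1$ while the sequence of probability measures $\bar{p}_{j,\cdot}(t)$ converges pointwise to the probability measure $u$ on $\N$; Scheff\'e's lemma then promotes pointwise convergence to total-variation convergence, so $\sum_{i\geq 1}\bar{p}_{ji}(t)/i\to\sum_{i\geq 1}u_i/i=\sum_{i\geq 1}m_i$, and the sum is finite because $m_i\leq u_i$. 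Taking the ratio, the conditional probability converges to $m_k/\sum_i m_i=a_k$, completing the proof.
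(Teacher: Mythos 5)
Your proposal is correct and is, in substance, the same argument as the paper's: the paper also reduces everything to the chain $(X_t)$ with generator $\bar{Q}$ via the intertwining identity $\pr(D_t=k\mid D_0=j)=\frac{j}{k}e^{-(1-2p)t}\pr(X_t=k\mid X_0=j)$, then invokes positive recurrence to get $\pr(X_t=k\mid X_0=j)\to u_k$ and $\E(1/X_t\mid X_0=j)\to\sum_i u_i/i=\sum_i m_i$, exactly as you do (and you read the first display correctly: the ``$D_t\geq 1$'' in the statement is a typo for $D_t=k$, as the right-hand side depends on $k$; the quantity $\pr(D_t\geq 1\mid D_0=j)e^{(1-2p)t}\to j\sum_i m_i$ is what the paper records separately). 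The one genuine difference is how the intertwining identity is obtained. The paper simply cites Lemma 3.3(a)(ii) of \cite{pollettquasi1988}, whereas you derive it by exponentiating the similarity relation $\bar{Q}=D^{-1}(Q|_{\N}+(1-2p)I)D$. That formal step is the only weak point in your write-up: both $Q$ and $\bar{Q}$ have unbounded jump rates ($q_{j,j+1}=jp$), so the transition semigroup is not literally a matrix exponential $e^{tQ|_{\N}}$, and the entry-wise identity has to be justified at the level of minimal solutions of the Kolmogorov backward equations together with non-explosion (which does hold here, since the rates grow only linearly, as for a Yule process). This is precisely the technical content of the Pollett lemma the paper invokes; citing it, or verifying that $\frac{j}{k}e^{-(1-2p)t}\bar{p}_{jk}(t)$ satisfies the backward equations for $Q$ and appealing to uniqueness, would close the gap. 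On the other side of the ledger, your Scheff\'{e} argument for $\sum_{i\geq 1}\bar{p}_{ji}(t)/i\to\sum_{i\geq 1}m_i$ supplies a limit-interchange detail that the paper asserts without comment, so your treatment of the denominator is actually more complete than the original.
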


\begin{proof}
By Lemma 3.3(a)(ii) of \cite{pollettquasi1988} and using Lemma \ref{invariant}, we have that $$\pr(D_t=k|D_0=j)=\frac{j}{k}e^{-(1-2p)t}\pr(X_t=k|X_0=j),$$ where $(X_t)_{t\geq 0}$ is a continuous time Markov chain with generator $\bar{Q}$, and the first part follows on taking limits as $t\to\infty$ and recalling the definition of $m_k=u_k/k$.  For the second part, $$\pr(D_t \geq 1|D_0=j)e^{(1-2p)t}=j\E\left(\frac{1}{X_t}|X_0=j\right).$$
We can also calculate $$\pr(D_t=k|D_0=j,D_t\geq 1)=\frac{\pr(X_t=k|X_0=j)}{k\E\left(\frac{1}{X_t}|X_0=j\right)}.$$  As we are assuming $(X_t)_{t\geq 0}$ is positive recurrent, $\pr(X_t=k|X_0=j)\to u_k$ as $t\to\infty$, and furthermore $\E\left(\frac{1}{X_t}|X_0=j\right)\to \sum_{i=1}^{\infty}\frac{u_i}{i}=\sum_{i=1}^{\infty}m_i$ as $t\to\infty$.  Hence $$\lim_{t\to\infty}\pr(D_t=k|D_0=j,D_t\geq 1)=\frac{m_k}{\sum_{i=1}^{\infty}m_i}=a_k$$ for any $j$, giving the result.\end{proof}

Our aim now is to find when $\bar{Q}$ is positive recurrent, and to find out more about our quasi-stationary distribution $a$ when it is.  We will do this via a Foster-Lyapunov approach to investigating the tail of a stationary distribution and whether one exists.  Given a test function $V$, the drift at $x$ is given by $\Delta V(x)=\bar{Q} V(x)$, which is \begin{equation}\label{drift}p(x+1)V(x+1)-p(x+2)V(x)+p\E(V(1+Y)),\end{equation} where $Y\sim Bin(x-1,p)$.

\begin{prop}\label{qmoment}Let $q>0$. \begin{enumerate}\item  If $-1+q+p^q<0$ then $\bar{Q}$ is positive recurrent and its stationary distribution has a $q$th moment. \item If $p^q=1-q$, $p<e^{-1}$ and $r>0$ then $\bar{Q}$ is positive recurrent and a random variable $X$ with its stationary distribution has $\E(X^q(\log(X+1))^{-(r+1)})$ finite.\end{enumerate}\end{prop}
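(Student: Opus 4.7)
The plan is a standard Foster-Lyapunov approach applied to $\bar{Q}$. If one can find $V\geq0$, $f\geq0$ with $f(x)\to\infty$ and a finite set $C$ such that $\bar{Q}V(x)\le -f(x)+b\mathbf{1}_C(x)$, then (using irreducibility of $\bar{Q}$, clear from its off-diagonal entries) one obtains positive recurrence and the moment bound $\pi(f)<\infty$. Using (\ref{drift}), I would rewrite the drift as
$$\bar{Q}V(x)=p(x+1)\bigl(V(x+1)-V(x)\bigr)+p\bigl(\E V(1+Y)-V(x)\bigr),\qquad Y\sim\mathrm{Bin}(x-1,p).$$

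For part (a) the natural test function is $V(x)=x^q$. The discrete derivative is $V(x+1)-V(x)=qx^{q-1}+O(x^{q-2})$, and a second-order Taylor expansion of $u\mapsto u^q$ about $\mu=\E(1+Y)=1-p+px$, combined with $\Var(Y)=O(x)$ and a binomial Chernoff bound to kill non-polynomial contributions, gives $\E V(1+Y)=\mu^q+O(x^{q-1})=p^qx^q+O(x^{q-1})$. Assembling the pieces yields $\bar{Q}V(x)=p(q-1+p^q)x^q+O(x^{q-1})$, which is bounded above by $-cx^q$ for all sufficiently large $x$ under the hypothesis $q+p^q-1<0$. Foster-Lyapunov then delivers positive recurrence together with $\pi(x^q)<\infty$.

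For part (b) the $x^q$ coefficient is exactly zero at the critical $q$, so I would choose $V(x)=x^qL(x)$ with $L(x)=(\log(x+1))^{-r}$ and push to the next order. Expansions one order deeper give
\begin{align*}
V(x+1)-V(x)&=qx^{q-1}L(x)-\frac{rx^{q-1}L(x)}{\log x}+O\!\left(\frac{x^{q-1}L(x)}{(\log x)^2}\right),\\
L(\mu)&=L(x)\left(1+\frac{r|\log p|}{\log x}+O\!\left(\frac{1}{(\log x)^2}\right)\right),
\end{align*}
using $\log(\mu+1)=\log x+\log p+O(1/x)$ in the second line; the quadratic Taylor remainder for $u^qL(u)$ at $\mu$ together with $\Var(Y)=O(x)$ contributes only $O(x^{q-1}L(x))$. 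Summing the pieces and using $q+p^q=1$ to annihilate the $x^qL(x)$ terms leaves
$$\bar{Q}V(x)=-\frac{pr\bigl(1-p^q|\log p|\bigr)}{\log x}\,x^qL(x)+O\!\left(\frac{x^qL(x)}{(\log x)^2}\right).$$
Positivity of $1-p^q|\log p|$ is the only place $p<e^{-1}$ enters: the function $g(q)=1-q-p^q$ satisfies $g(0)=0$, $g'(0)=|\log p|-1>0$, $g''<0$ and $g(1)=-p<0$, hence is concave and crosses zero from above at the unique positive root of $p^q=1-q$, so $g'(q)=-1+p^q|\log p|<0$ there. Therefore $\bar{Q}V(x)\le -cx^q(\log(x+1))^{-(r+1)}$ for $x$ large, and Foster-Lyapunov gives positive recurrence together with $\E\bigl(X^q(\log(X+1))^{-(r+1)}\bigr)<\infty$.

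The main obstacle is bookkeeping in part (b): exposing the lone surviving subleading term requires tracking three separate $1/\log x$-order corrections (from the discrete derivative, from $L(\mu)/L(x)-1$, and from the quadratic Taylor remainder) and verifying each is either the sought $-c\,x^qL(x)/\log x$ contribution or genuinely of order $x^qL(x)/(\log x)^2$. The one non-routine observation is that the constant shift $\log p$ in $\log(\mu+1)=\log x+\log p+O(1/x)$ is exactly what produces the $|\log p|$ factor that pairs with $p^q$ in the critical coefficient, and that $p<e^{-1}$ is precisely what places the positive root of $g$ in the region where $g'<0$, so that the coefficient of the surviving term has the right sign.
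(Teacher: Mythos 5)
Your proposal is correct and takes essentially the same route as the paper: a Foster--Lyapunov drift argument (Meyn--Tweedie) applied to $\bar{Q}$ with the same test functions, $V(x)=x^q$ for part (a) and $V(x)=x^q(\log(x+1))^{-r}$ for part (b), and your surviving coefficient $-pr\bigl(1-p^q|\log p|\bigr)$ agrees, using $p^q=1-q$, with the paper's $pr(1-q)\bigl(-\log p-\frac{1}{1-q}\bigr)$. Your explicit Taylor/Chernoff treatment of the binomial term and the concavity analysis of $g(q)=1-q-p^q$ are just slightly more detailed versions of the concentration step and sign check the paper carries out.
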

\begin{proof}
We apply Theorem 4.2 of Meyn and Tweedie \cite{meyntweedie1993}, which in our setting with state space equal to $\N$ tells us that, given a function $f: \N \to [1,\infty)$, if there exists a function $V: \N \to \R^{+}$ such that $$\Delta V(x) \leq -c_1 f(x)+c_2$$ then the Markov chain is positive recurrent and that a random variable $X$ with its stationary distribution has $\E(f(X))$ finite.

For $f(x)=x^q$, set $V(x)=f(x)=x^q$.  Then \eqref{drift} becomes $$\Delta V(x)= p((x+1)^{q+1}-(x+2)x^q+\E((1+Y)^q)).$$  For large $x$ the concentration of the Binomial around its mean will give $\E((1+Y)^q))\sim (1+(x-1)p)^q$, giving, as $x\to\infty$, \begin{eqnarray*}\Delta V(x) &\sim & p (x+1)^q\left((x+1)-(x+2)\left(\frac{x}{x+1}\right)^q+\left(\frac{1+(x-1)p}{x+1}\right)^q\right)\\ &\sim & p(x+1)^q\left((x+1)-(x+2)\left(1-\frac{q}{x+1}\right)+p^q\right) \\ & \sim & p(x+1)^q(-1+q+p^q).\end{eqnarray*}  Hence, if $-1+q+p^q<0$, then we will have $\Delta V(x) \leq -c_1 f(x)+c_2$ as required, showing that the stationary distribution has a $q$th moment.

Now let $f(x)=(x+1)^q(\log (x+1))^{-(r+1)}$ with $q$ such that $p^q=1-q$, and let $V(x)=x^q(\log (x+1))^{-r}$.  Then, similarly to the above, we get \begin{eqnarray*}\Delta V(x) & \sim & \frac{p(x+1)^q}{(\log (x+2))^{r}} \left((x+1)-(x+2)\left(\frac{x}{x+1}\right)^q\left(\frac{\log (x+2)}{\log (x+1)}\right)^r+\left(\frac{1+(x-1)p}{x+1}\right)^q\left(\frac{\log (x+2)}{\log (2+p(x-1))}\right)^r\right) \\ & \sim & \frac{p(x+1)^q}{(\log (x+2))^{r}}\left(-1+q-r\frac{x+2}{(x+1)\log(x+1)}+p^q\left(\frac{\log (x+2)}{\log (2+p(x-1))}\right)^r\right) \\ & \sim & \frac{p(x+1)^q}{(\log (x+1))^{r}}(1-q)\left(-1-\frac{r}{(1-q)\log(x+1)}+\left(\frac{\log (x+2)}{\log (2+p(x-1))}\right)^r\right) \\ & \sim & \frac{p(x+1)^q(1-q)}{(\log (x+1))^{r}}\left(-1-\frac{r}{(1-q)\log(x+1)}+1-\frac{r \log p}{\log p+\log x}\right) \\ & \sim & \frac{p(x+1)^q(1-q)}{(\log (x+1))^{r}}\frac{r}{\log x}\left(-\log p-\frac{1}{1-q}\right) \\ & \sim & \frac{(x+1)^q}{(\log(x+1))^{r+1}}p(1-q)r\left(-\log p-\frac{1}{1-q}\right). \end{eqnarray*} As $p^q=1-q$, we have $\log p=\frac{\log(1-q)}{q}$ and thus, as $\frac{-\log(1-q)}{q}-\frac{1}{1-q}<0$, we will have $\Delta V(x) \leq -c_1 f(x)+c_2$, and hence that a random variable $X$ with the stationary distribution of $\bar{Q}$ has $\E(f(X))$ finite.
\end{proof}

\begin{cor}\label{prange}If $p<e^{-1}$ then $\bar{Q}$ is positive recurrent.\end{cor}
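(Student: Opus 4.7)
The plan is to deduce Corollary \ref{prange} directly from part (a) of Proposition \ref{qmoment} by exhibiting a single exponent $q > 0$ for which $-1 + q + p^q < 0$. To do this, consider the smooth function $g(q) = -1 + q + p^q$ on $[0, \infty)$. One computes $g(0) = 0$ and $g'(q) = 1 + p^q \log p$, so $g'(0) = 1 + \log p$. The hypothesis $p < e^{-1}$ is exactly $\log p < -1$, which is the statement $g'(0) < 0$. Hence $g(q) < 0$ for all sufficiently small $q > 0$, and picking any such $q$ and applying Proposition \ref{qmoment}(a) yields positive recurrence of $\bar{Q}$.

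There is really no substantive obstacle here: the Foster--Lyapunov computation in Proposition \ref{qmoment} has already done the hard work, and the corollary amounts to the observation that $p = e^{-1}$ is precisely where $g$ ceases to take negative values near the origin. Indeed, for $p > e^{-1}$ one has $g'(0) > 0$, so the criterion $-1 + q + p^q < 0$ fails on a whole neighbourhood of $0$; this explains why $e^{-1}$ is the natural cutoff within this approach. Part (b) of Proposition \ref{qmoment} is not needed for the corollary itself, but will presumably be invoked later in order to extract information about the tail exponent $\beta(p)$ at the critical $q$ satisfying $p^q = 1 - q$.
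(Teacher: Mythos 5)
Your proposal is correct and is essentially identical to the paper's own proof: the paper also deduces the corollary from Proposition \ref{qmoment}(a) together with the observation that $-1+q+p^q<0$ for sufficiently small $q>0$ when $p<e^{-1}$. Your derivative computation $g'(0)=1+\log p<0$ merely makes explicit the elementary calculus step the paper leaves implicit.
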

\begin{proof}This follows from Proposition \ref{qmoment} and the fact that $-1+q+p^q<0$ for sufficiently small $q>0$ if $p<e^{-1}$.
\end{proof}

By similar arguments, we can also obtain some negative results.

\begin{prop}\label{transient}If $p>e^{-1}$ then $\bar{Q}$ is transient.\end{prop}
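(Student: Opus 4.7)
The plan is to mirror the Foster-Lyapunov approach used in Proposition \ref{qmoment}, but to invoke a criterion for transience rather than one for positive recurrence. A standard such criterion states that an irreducible continuous-time Markov chain is transient whenever there exist a bounded non-negative function $V$ and a finite set $C$ satisfying $\Delta V(x) \geq 0$ for all $x \notin C$ and $V(x_0) > \sup_{y \in C} V(y)$ for some $x_0 \notin C$. This follows from an optional-stopping argument applied to the bounded submartingale $V(X_{t \wedge \tau_C})$, since $\pr_{x_0}(\tau_C < \infty) = 1$ would force $V(x_0) \leq \sup_C V$, contradicting the strict inequality.

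The natural choice of Lyapunov function is $V(x) = 1 - x^{-q}$, which is bounded in $[0,1)$, strictly increasing, and tends to $1$ at infinity. Substituting into the drift expression \eqref{drift} and expanding each term to leading order, using concentration of $Y \sim \mathrm{Bin}(x-1, p)$ about its mean to estimate $\E[(Y+1)^{-q}] \sim (px)^{-q}$, I expect the calculation to mirror the one in Proposition \ref{qmoment} and to produce
$$\Delta V(x) \sim p\, x^{-q}\bigl(1 + q - p^{-q}\bigr)$$
as $x \to \infty$.

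Writing $h(q) = 1 + q - p^{-q}$, we have $h(0) = 0$ and $h'(0) = 1 + \log p$, which is strictly positive precisely when $p > e^{-1}$ --- exactly the same threshold, approached from the opposite side, as for positive recurrence in Corollary \ref{prange}. Hence I can pick $q > 0$ small enough that $h(q) > 0$, so $\Delta V(x) > 0$ for all $x$ larger than some threshold $N$. Taking $C = \{1, \ldots, N\}$ and any $x_0 > N$ then gives $V(x_0) > V(N) = \sup_C V$, and the transience criterion applies.

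The main technical obstacle will be making the expansion of $\E[(Y+1)^{-q}]$ rigorous. Unlike the polynomial growth of $(Y+1)^q$ handled in the proof of Proposition \ref{qmoment}, the negative exponent makes values of $Y$ close to zero potentially delicate, but since $(y+1)^{-q}$ is bounded by $1$ and smooth in $y \geq 0$, splitting the expectation on the typical event $\{|Y - (x-1)p| \leq x^{2/3}\}$ and bounding its complement by a Chernoff estimate yields a correction exponentially smaller than the leading $x^{-q}$ term. The remaining Taylor expansions for $(x+1)^{1-q}$ and $(x+2)x^{-q}$ parallel those already carried out in Proposition \ref{qmoment}.
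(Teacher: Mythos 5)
Your proposal is correct and essentially identical to the paper's proof: the paper applies Theorem 7.2.2 of Menshikov, Popov and Wade with the bounded function $V(x)=x^{-q}$ and the computation $\lim_{x\to\infty}x^q\Delta V(x)=-p(1+q-p^{-q})$, which is negative for some small $q>0$ exactly when $p>e^{-1}$. Your $V(x)=1-x^{-q}$ is just $1$ minus the paper's Lyapunov function, so your submartingale/optional-stopping criterion is the mirror image of the cited supermartingale criterion, and the drift asymptotics and threshold analysis coincide.
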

\begin{proof}
By Theorem 7.2.2 of Menshikov, Popov and Wade \cite{menshikov_popov_wade_2016}, it will be enough to find a threshold $x_0\in \N$ and bounded function $V:\N \to \R^{+}$ such that $\Delta V(x)< 0$ for $x\geq x_0$ and $V(y)<\inf_{x< x_0} V(x)$ for some $y\geq x_0$.

Consider the non-negative bounded test function $V(x)=x^{-q}$, for some $q>0$.  Then $$\Delta V(x)=p((x+1)(x+1)^{-q}-(x+2)x^{-q}+\E((1+Y)^{-q})),$$ and $$\lim_{x\to\infty}x^q \Delta V(x)=-p(1+q-p^{-q}),$$ which is negative for some $q>0$ if and only if $p>e^{-1}$.  Hence, if $p>e^{-1}$ we can choose $q$ so that $\Delta V(x)<0$ for $x$ sufficiently large, which gives the result.\end{proof}

\begin{prop}\label{noqmoment} Let $q>0$ with $-1+q+p^q\geq 0$.  Then the stationary distribution of $\bar{Q}$, if it exists, does not have a $q$th moment.\end{prop}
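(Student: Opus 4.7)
The plan is to argue by contradiction, exploiting the exact stationarity identity $u \bar Q = 0$ against a power test function, in the same spirit as Proposition~\ref{qmoment} but using equality rather than inequality. Suppose $\bar Q$ admits a stationary distribution $u$ and that a random variable $X$ with this distribution has $\E(X^q) < \infty$ for some $q > 0$ with $h(q) := -1 + q + p^q \geq 0$.

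The case $q \geq 1$ is immediate. Since $\bar Q$ lives on $\{1, 2, \ldots\}$, monotonicity of moments gives $\E(X) \leq \E(X^q) < \infty$, so I would take $V(x) = x$ and compute the drift exactly,
\[
\bar Q V(x) \;=\; p(x+1)^2 - p(x+2)x + p\bigl(1 + (x-1)p\bigr) \;=\; p^2(x-1) + 2p.
\]
Linear growth of $|\bar Q V|$ combined with $\E(X) < \infty$ validates $\E(\bar Q V(X)) = 0$, forcing $\E(X) = 1 - 2/p$, which is negative for every $p \in (0, 1]$ and contradicts $X \geq 1$.

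For $0 < q < 1$ with $h(q) \geq 0$ (the substantive range, relevant because the positive root $\beta^*$ of $h$ lies in $(0,1)$), the same idea will be implemented with $V(x) = x^q$. Starting from the downward-crossing balance
\[
u_j(j+1)p \;=\; p \sum_{k > j} u_k\, \pr(Y_{k-1} \leq j - 1), \qquad Y_{k-1} \sim Bin(k-1, p),
\]
I would multiply by $j^{q-1}$ and sum over $j \geq 1$. Euler--Maclaurin gives $\sum_{j=\ell+1}^{k-1} j^{q-1} = \tfrac{(k-1)^q - \ell^q}{q} + O((k-1)^{q-1})$, and standard binomial concentration gives $\E(Y_{k-1}^q) = p^q(k-1)^q + O((k-1)^{q-1})$. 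After rearrangement this should collapse to an identity of the shape
\[
\frac{p h(q)}{q}\, \E(X^q) \;=\; -p(2 - p^q)\, \E(X^{q-1}) \;+\; R,
\]
where the remainder $R$ involves only $u$-averages of $x^r$ with $r \leq q - 1 < 0$ and is therefore bounded by a constant depending only on $p$ and $q$. Since $h(q) \geq 0$ the left-hand side is non-negative, while $\E(X^{q-1}) \geq u_1 > 0$ makes the leading right-hand term strictly negative, so the two sides cannot balance provided $|R|$ is strictly smaller than $p(2-p^q)\,u_1$.

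The main obstacle is precisely this last requirement: quantifying the Euler--Maclaurin and binomial-moment errors with effective constants small enough to be dominated by $p(2-p^q)\,u_1$. This is most delicate at the boundary $h(q) = 0$, where the left-hand side vanishes and the entire contradiction rests on the sign of the right-hand side. These estimates are standard but must be tracked uniformly in $k$; once carried out, they deliver the contradiction and hence $\E(X^q) = \infty$.
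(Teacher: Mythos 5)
Your $q\geq 1$ case is correct in substance: the exact drift computation $\bar{Q}V(x)=p^2(x-1)+2p$ for $V(x)=x$ checks out, and the forced value $\E(X)=1-2/p<0$ contradicts $X\geq 1$. Two caveats: justifying $\E(\bar{Q}V(X))=0$ from $\E(X)<\infty$ alone needs a truncation (say $V_M(x)=\min(x,M)$, whose boundary contribution is $O(M\pr(X\geq M))\to 0$), since the naive Fubini interchange involves the uncancelled terms $p(x+1)V(x+1)$ and $p(x+2)xV(x)/x$, which are of order $x^2$; and the case is logically redundant, because $h(q)=-1+q+p^q$ is convex with $h(0)=0$ and $h'(0)=1+\log p<0$ for $p<e^{-1}$, so $\{q>0:h(q)\geq 0\}=[q^*,\infty)$ for a single root $q^*\in(0,1)$, and monotonicity of moments on $\{X\geq 1\}$ reduces the entire proposition to the boundary exponent $q^*$ --- exactly where your argument is weakest.

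The substantive case has a genuine structural gap, which you partly acknowledge but underestimate. After summing the flow-balance against $j^{q-1}$, every quantity in your final relation is a fixed finite number: there is no asymptotic parameter left, so the Euler--Maclaurin and binomial-moment ``errors'' do not tend to zero anywhere; they are bona fide terms of exactly the same order as your main term $\E(X^{q-1})\in(0,1]$, and you have no lower bound on $u_1$ to dominate them. Worse, your coefficient $-p(2-p^q)$ is incomplete: the second-order Euler--Maclaurin correction, of size $\tfrac12((k-1)^{q-1}-Y^{q-1})$ with $p^{q-1}>1$ for $q<1$, and the binomial-variance correction $\E(Y_{k-1}^q)=p^q(k-1)^q-\tfrac{q(1-q)(1-p)}{2}p^{q-1}(k-1)^{q-1}+\cdots$ both feed $\E(X^{q-1})$-order terms of the \emph{opposite} sign into $R$, so even the sign of the net coefficient of $\E(X^{q-1})$ is not established by your sketch --- and at $h(q)=0$ the whole contradiction rests on that sign. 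In the strict case $h(q)>0$ the identity fares no better: it yields only an a priori bound $\E(X^q)\leq C(p,q)$, which does not contradict assumed finiteness (any genuine stationary law with a finite $q$th moment would satisfy all such identities consistently). The paper sidesteps this by staying pointwise in $x$: it shows $\Delta V(x)\sim ph(q)(x+1)^q>0$ when $h(q)>0$, and at the root carries the expansion one order further to get $\Delta V(x)\sim p(x+1)^{q-1}q\left(1+(1-q)\left(\tfrac1p-\tfrac32\right)\right)>0$ (using $p<e^{-1}$), so that the drift of $x^q$, flattened below a threshold $x_0$, is everywhere nonnegative and eventually strictly positive, whence $\E\left((\max(X_t,x_0))^q\right)$ would be strictly increasing in $t$, contradicting stationarity. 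Keeping the asymptotics in $x$ means only leading-order signs matter and no exact constant-tracking is needed; to salvage your summed-identity route you would have to localize it at infinity (weight by $j^{q-1}\mathbf{1}_{j\geq J}$ and let $J\to\infty$) and reproduce precisely the second-order sign computation above --- at which point you have rederived the paper's proof.
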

\begin{proof}
Let $V(x)=x^q$, and first consider the case where $-1+q+p^q> 0$.  Then, as in Proposition \ref{qmoment} here we have $\Delta V(x)\sim p (x+1)^q(-1+q+p^q)$ as $x\to\infty$, but here this is positive.  Hence there exists $x_0$ such that $\E((\max(X_t,x_0))^q)$, if it exists, is strictly increasing in $t$, which means a stationary distribution of $\bar{Q}$ cannot have a $q$th moment.

If $-1+q+p^q= 0$, then again consider $V(x)=x^q$.  We have \begin{eqnarray*}\Delta V(x) &\sim  & p(x+1)^q\left(x+1-(x+2)\left(1-\frac{1}{x+1}\right)^q+\left(\frac{1+p(x-1)}{x+1}\right)^q\right) \\ & \sim & p(x+1)^q\left(x+1-(x+2)\left(1-\frac{1}{x+1}\right)^q+p^q\left(1+\frac{1/p-2}{x+1}\right)^q\right) \\ & = & p(x+1)^q\left(-1+q\frac{x+2}{x+1}-\frac{q(q-1)(x+2)}{2(x+1)^2}+p^q+p^q \frac{q(1/p-2)}{x+1}+O\left(x^{-2}\right)\right)\\ & \sim & p(x+1)^{q-1}\left(q+q(1-q)\left(\frac{1}{p}-\frac32\right)\right). \end{eqnarray*}  As $p<e^{-1}$, this again will be strictly positive for $x$ sufficiently large.
\end{proof}

Given the relationship between our quasi-stationary distribution $a$ and the stationary distribution $u$ of $\bar{Q}$, $a$ will have a $(q+1)$th moment if and only if $u$ has a $q$th moment.  Hence the criterion in Proposition \ref{qmoment} becomes $-3+\beta+p^{\beta-2}<0$ for $a$ to have a $(\beta-1)$th moment, so $a$ has tail behaviour close to that of a power law with index $\beta$ where $-3+\beta+p^{\beta-2}=0$ in the sense that it is lighter than any heavier tailed power law and heavier than any lighter tailed power law.  The second part of Proposition \ref{qmoment} and the $-1+q+p^q$ case of Proposition \ref{noqmoment} give stronger conditions on the tail.

\section{Convergence of conditional probabilities}\label{proofmain1}

In this section we complete the proof of parts (a) and (c) of Theorem \ref{main}.  We note that under the assumptions of the theorem Corollary \ref{prange} tells us that $\bar{Q}$ is positive recurrent and hence that Proposition \ref{converge} applies, meaning that the probability that in the continuous time model the tracked vertex $V_t$ has degree $d$ at time $t$, conditional on its degree being non-zero, is $a_d$.  It remains to prove that this also applies to a randomly chosen vertex.

We define a continuous time process $(U_t)_{t\geq 0}$ by, each time a vertex is added to the graph, moving to a vertex chosen uniformly at random from the vertices of the new graph.  This ensures that $\pr(U_t=v|N_t=n)=1/n$ for $v\leq n$.  Let $\bar{D}_t$ be the degree of $U_t$ in $\Gamma_t$.

\begin{lem}\label{randomvertex} Given $\epsilon>0$, there exists $v_{\epsilon}$ such that for $v\geq v_{\epsilon}$ $1\leq \frac{\pr(V_t=v)}{\pr(U_t=v)} \leq 1+\epsilon$\end{lem}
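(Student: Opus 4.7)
The plan is to compute $\pr(V_t = v)$ and $\pr(U_t = v)$ separately for $v = n_0+j$ with $j \geq 1$, by conditioning on the creation time $T_j$ of $v$, and then compare the resulting conditional ratio pointwise in $T_j$.

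For the tracked vertex, conditional on $T_j = s$, the vertex that duplicates at time $s$ is uniform on the $n_0+j-1$ vertices present just before (by independence of the per-vertex Poisson clocks) and is therefore independent of $V_{s-}$; the jump $V \to v$ at time $s$ thus occurs with probability $1/(n_0+j-1)$. Independently, the fresh rate-one Poisson clock of $v$ leaves $v$ intact on $[s,t]$ with probability $e^{-(t-s)}$. Hence
\[
\pr(V_t=v\mid T_j=s) \;=\; \frac{e^{-(t-s)}}{n_0+j-1}.
\]
For the uniformly-chosen vertex, the Markov property of the Yule process gives that, conditional on $T_j=s$, the process $(N_{s+u})_{u \geq 0}$ is Yule started from $n_0+j$, and by construction of $U_t$ we have $\pr(U_t=v\mid N_t) = 1/N_t$ on the event $\{v \leq N_t\}$. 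So $\pr(U_t=v\mid T_j=s) = \E[1/\tilde N_{t-s}]$, where $\tilde N$ denotes a Yule process with $\tilde N_0 = m := n_0+j$.

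The main obstacle is estimating $\E[1/\tilde N_u]$ on both sides. Jensen's inequality together with $\E[\tilde N_u] = me^u$ gives the lower bound $\E[1/\tilde N_u] \geq e^{-u}/m$. For the matching upper bound I would use $\E[1/\tilde N_u] \leq \E[1/(\tilde N_u - 1)]$ and establish the identity
\[
\E\bigl[1/(\tilde N_u - 1)\bigr] \;=\; \frac{e^{-u}}{m-1} \qquad (m \geq 2),
\]
which is a short computation from the negative-binomial law $\pr(\tilde N_u = n) = \binom{n-1}{m-1}e^{-mu}(1-e^{-u})^{n-m}$ via $\binom{n-1}{m-1}/(n-1) = \binom{n-2}{m-2}/(m-1)$ and the generating function $\sum_{k \geq 0}\binom{k+r}{r}x^k = (1-x)^{-r-1}$. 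This is the only nontrivial calculation in the argument.

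Combining the bounds yields the pointwise estimate
\[
1 \;\leq\; \frac{\pr(V_t=v\mid T_j=s)}{\pr(U_t=v\mid T_j=s)} \;\leq\; \frac{n_0+j}{n_0+j-1} \;=\; \frac{v}{v-1},
\]
valid uniformly for $s \in [0,t]$. Pointwise inequalities between nonnegative integrands transfer to inequalities between their integrals against the law of $T_j$ restricted to $[0,t]$, so the same bounds hold for the unconditional ratio $\pr(V_t=v)/\pr(U_t=v)$. Choosing $v_\epsilon$ large enough that $v/(v-1) \leq 1+\epsilon$ and also $v_\epsilon > n_0$ completes the proof; note in particular that the bound is uniform in $t$.
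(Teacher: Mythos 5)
Your proof is correct, and it reaches the paper's conclusion by a genuinely different decomposition. The paper conditions on the vertex count: a short discrete-time induction shows $\pr(V_t=v\mid N_t=n)=\frac{1}{n-1}$ for non-initial $v\le n$, while $\pr(U_t=v\mid N_t=n)=\frac{1}{n}$ by construction, and since $N_t$ is negative binomial both probabilities become explicit sums over $n\ge v$ whose terms differ by a factor $\frac{n}{n-1}\in\left[1,\frac{v}{v-1}\right]$, so the lemma follows by termwise comparison. You instead condition on the birth time $T_j$ of $v$, obtaining the exact formula $\pr(V_t=v\mid T_j=s)=e^{-(t-s)}/(v-1)$ from the uniform-and-independent ringer identity together with the fresh clock of $v$ (this is the same independence of the jump sequence from the jump times that the paper invokes), and you then sandwich $\pr(U_t=v\mid T_j=s)=\E[1/\tilde N_{t-s}]$ between $e^{-(t-s)}/v$ (Jensen) and $e^{-(t-s)}/(v-1)$; your negative-binomial identity $\E[1/(\tilde N_u-1)]=e^{-u}/(m-1)$ is correct and verifies exactly as you indicate via $\frac{1}{n-1}\binom{n-1}{m-1}=\frac{1}{m-1}\binom{n-2}{m-2}$ and the generating-function sum. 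Both routes yield the same two-sided bound $1\le\pr(V_t=v)/\pr(U_t=v)\le v/(v-1)$ uniformly in $t$ (a uniformity the paper leaves implicit), and integrating the pointwise conditional bounds against the law of $T_j$ is valid since both probabilities vanish on $\{T_j>t\}$. The paper's version is marginally more elementary, needing only the termwise ratio and no moment identity, while yours buys exact conditional formulas and makes the independence structure and the $t$-uniformity explicit. Your restriction to $v=n_0+j>n_0$ is harmless, since $v_\epsilon$ may be taken larger than $n_0$, exactly as in the paper, where the initial vertices carry the different weight $\frac{n_0-1}{n_0(n-1)}$.
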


\begin{proof}
First of all we note that we can consider the tracking process in the discrete time model, letting $\tilde{V}_n$ be the tracked vertex at time $n$.  It is then easy to show by induction on $n$ that for any non-initial vertex $v>n_0$ we have $\pr(\tilde{V}_n=v)=\frac{1}{n-1}$ and that for an initial vertex $v \leq n_0$ we have $\pr(\tilde{V}_n=v)=\frac{n_0-1}{n_0(n-1)}$.

In the continuous time model, the sequence of changes of tracking is independent of the times of the duplication events, so we can conclude that $$\pr(V_t=v|N_t=n)=\left\{\begin{array}{lr} \frac{1}{n-1} & n_0<v\leq n \\ \frac{n_0-1}{n_0(n-1)} & v \leq n_0\end{array}\right.$$

The number of vertices at time $t$ is Negative Binomial with parameters $n_0$ and $e^{-t}$ (which can be deduced from Yule \cite{yule1925}), so, for $v>n_0$, $$\pr(V_t=v)=\sum_{n=v}^{\infty}\frac{1}{n-1}\binom{n}{n_0}e^{-n_0 t}(1-e^{-t})^{n-n_0}.$$  Similarly
$$\pr(U_t=v)=\sum_{n=v}^{\infty}\frac{1}{n}\binom{n}{n_0}e^{-n_0 t}(1-e^{-t})^{n-n_0}.$$
Hence, given $\epsilon>0$, there exists $v_{\epsilon}$ such that for $v\geq v_{\epsilon}$ $1\leq \frac{\pr(V_t=v)}{\pr(U_t=v)} \leq 1+\epsilon$.
\end{proof}

In the continuous time model, both the events that $V_t=v$ and $U_t=v$ are independent of the degree of $v$, and $\pr(V_t<v_{\epsilon})\to 0$ as $t\to\infty$.  Hence we have that $$\frac{\pr(D_t=k)}{\pr(\bar{D}_t=k)}\to 1$$ and $$\frac{\pr(D_t>0)}{\pr(\bar{D}_t>0)}\to 1$$ as $t\to\infty$, which completes the proof that $\pr(U_n=k|U_n\neq 0)=a_k$, and, to complete the proof of part (a) of Theorem \ref{main}, note that if conditioning on $G_m$ we can simply relabel $G_m$ as $\Gamma_0$. Part (c) then follows from Propositions \ref{qmoment} and \ref{noqmoment}.

\section{Convergence in probability}\label{proofmain2}

In this section we will complete the proof of part (b) of Theorem \ref{main}.  We will be working with the continuous time embedding $(\Gamma_t)_{t\geq 0}$, and for now we will assume that the initial graph $\Gamma_0$ is two vertices connected by a single edge, so that $n_0=2$.

Let $E_t$ be the number of edges of $\Gamma_t$ at time $t$; then $E_t=e^{2pt}W_t$ where $(W_t)_{t\geq 0}$ is a non-negative martingale, and by Theorem 2.9 of Hermann and Pfaffelhuber \cite{pfaffel} we know that $W_t$ converges in $L^2$ to a limit $W$.  We first show a slight strengthening of the part of Theorem 2.9 of \cite{pfaffel} which refers to the number of edges.

\begin{lem}\label{edgelimit}We have that $\pr(W=0)=0$.\end{lem}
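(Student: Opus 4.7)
The plan is to deduce $\pr(W=0)=0$ from three ingredients: (i) positivity of $\pr(W>0)$ via a second moment computation, (ii) a coupling argument showing this positivity extends uniformly to any starting graph containing an edge, and (iii) L\'evy's 0--1 law applied to a Doob martingale.

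From the $L^2$ convergence $W_t \to W$ established in \cite{pfaffel} we immediately get $\E(W) = E_0 = 1$ and $M := \E(W^2) < \infty$, so the Paley--Zygmund inequality gives
$$\pr(W > 0) \geq \frac{\E(W)^2}{\E(W^2)} = \frac{1}{M} =: \alpha > 0.$$
I would then upgrade this to a uniform lower bound: for any (possibly random) graph $G$ containing at least one edge, letting $W(G)$ denote the martingale limit of the process started from $G$, I claim $\pr(W(G) > 0) \geq \alpha$. This should follow from a coupling argument using the continuous-time embedding $(\Gamma_t)$, in which each vertex carries an independent rate-$1$ Poisson clock. Fix an edge $\{u,v\} \in E(G)$ and realise in one probability space both the $G$-process and a sub-process started from just the edge $\{u,v\}$; use common Poisson clocks for $u$, $v$ and the vertices that appear in the sub-process, identifying each such vertex with the corresponding offspring in the $G$-process. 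For each potential edge in the sub-process, reuse the \emph{same} Bernoulli$(p)$ coin flip that determines the corresponding edge in the $G$-process; the $G$-process has additional independent coin flips for its other potential edges. By induction on duplication events, the edge set of the sub-process is always contained in that of the $G$-process, so $E^{\mathrm{sub}}_t \le E^G_t$ and hence $W^{\mathrm{sub}} \le W(G)$ in the coupling. Since $W^{\mathrm{sub}}$ has the same law as $W$, this gives $\pr(W(G)>0) \geq \alpha$.

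With this uniform bound in hand, the rest is short. The initial edge $\{1,2\}$ is never removed, so $\Gamma_s$ contains at least one edge for every $s$. The Markov property gives $\pr(W = 0 \mid \mathcal{F}_s) = q(\Gamma_s)$, where $q(G) := \pr(W(G) = 0)$, and the coupling bound yields $q(\Gamma_s) \leq 1 - \alpha$ almost surely for every $s$. On the other hand, since $\{W = 0\}$ is $\mathcal{F}_\infty$-measurable, L\'evy's 0--1 law tells us $\pr(W = 0 \mid \mathcal{F}_s) \to \mathbf{1}_{\{W = 0\}}$ almost surely as $s \to \infty$. Combining, $\mathbf{1}_{\{W = 0\}} \leq 1 - \alpha < 1$ almost surely, which forces $\mathbf{1}_{\{W = 0\}} = 0$ almost surely.

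The main obstacle is the careful construction of the coupling in step (ii). The continuous-time embedding with independent Poisson clocks makes the natural coupling easy to describe, but some care is needed in the bookkeeping: one must simultaneously match vertices across the two processes, reuse the same coin flips for edges present in both, and supply independent auxiliary randomness for the additional potential edges in the $G$-process, while preserving the correct single-edge-start distribution of the sub-process. Once this is in place, the domination $E^{\mathrm{sub}}_t \le E^G_t$ is immediate and the rest of the argument is formal.
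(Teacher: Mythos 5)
Your proof is correct, but it takes a genuinely different route from the paper's. The paper produces the zero--one dichotomy via independence: almost surely some $\Gamma_s$ contains two vertex-disjoint edges $i,j$; the subgraph processes descended from $i$ and $j$ are independent copies of the single-edge process, and since $W=0$ forces $W^{(i)}=W^{(j)}=0$ one gets $\pr(W=0)\leq \pr(W=0)^2$, so $\pr(W=0)\in\{0,1\}$, resolved by the bound $\pr(W=0)<1$ from Hermann and Pfaffelhuber. You instead get positivity quantitatively (Paley--Zygmund from the same $L^2$ convergence, giving $\pr(W>0)\geq 1/\E(W^2)=\alpha$), upgrade it to a uniform conditional bound $q(\Gamma_s)\leq 1-\alpha$ via a monotone coupling, and conclude with L\'evy's $0$--$1$ law. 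Your coupling is sound, and in fact you do not need to build it by hand: the sub-process you describe \emph{is} the subgraph descended from the fixed edge $\{u,v\}$, exactly the edge-decomposition the paper deploys immediately after this lemma; its descendant edge set is automatically contained in the $G$-process edge set, its edge-count dynamics have the single-edge-start law (isolated vertices in the sub-process duplicate without affecting edges, so the mismatch there is harmless since $W$ is a function of edge counts alone), and the common normalisation $e^{-2pt}$ gives $W^{\mathrm{sub}}\leq W(G)$ in the limit. Comparing what each buys: the paper's argument is shorter but leans on two things you avoid, namely the (glossed) almost-sure appearance of two disjoint edges and the independence of their descendant processes; your argument is the more standard and robust ``uniform conditional positivity plus L\'evy'' scheme for martingale limits, at the cost of the coupling bookkeeping and the Markov-property identity $\pr(W=0\mid\salj_s)=q(\Gamma_s)$, which holds here because $W=e^{-2ps}W(\Gamma_s)$-in-law given $\salj_s$ and edges are never deleted, so $\Gamma_s$ always retains the initial edge.
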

\begin{proof}
Almost surely, there will be $s$ such that $\Gamma_s$ has two edges $i$ and $j$ which do not share a vertex.  For $t>0$, we can then consider the subgraphs of $\Gamma_{s+t}$, which we will refer to as $G_{t}^{(i)}$ and $G_{t}^{(j)}$, descended from the edges $i$ and $j$, and the fact that the edges do not share an endpoint means that these two graph processes are independent.  Let $E_{t}^{(i)}$ and $E_{t}^{(j)}$ be the numbers of edges in the two subgraphs and let $W_{t}^{(i)}$ and $W_{t}^{(j)}$ be the corresponding martingales, with limits $W^{(i)}$ and $W^{(j)}$.  Then $\pr(W=0)\leq \pr(W^{(i)}=0)\pr(W^{(j)}=0)$, so $\pr(W=0)$ is either $0$ or $1$, but it is shown in \cite{pfaffel} that $\pr(W=0)<1$.
\end{proof}

Consider the graph at time $s$, when it has $E_s=e^{2ps}W_s$ edges.  We use a similar idea as in the proof of Lemma \ref{edgelimit}, decomposing the graph $\Gamma_t$ for $t>s$ as a union of graphs $\Gamma_{t-s}^{(i)}$ descended from edge $i$ of $\Gamma_s$, which clearly then each have the same distribution as $\Gamma_{t-s}$.  Let the number of vertices of degree $k$ of $\Gamma_{t}^{(i)}$ at time $t$ be $N_{t,k}^{(i)}$.

We note that the processes $(\Gamma_{t}^{(i)})_{t\geq 0}$ and $(\Gamma_{t}^{(j)})_{t\geq 0}$ depend only on duplication events at the vertices of edges $i$ and $j$ and their descendants and so are independent if edges $i$ and $j$ do not have a vertex in common; furthermore we note that the number of pairs of edges which do have a vertex in common is given by the number of 2-stars $S_s$ in the graph $\Gamma_s$, which by the second part of Theorem 2.9 of \cite{pfaffel} we know is equal to $e^{(2p+p^2)s}\tilde{S}_s$ where $(\tilde{S}_t)\to S$ for some limiting random variable $S$.

For fixed $s$ and $t>s$, consider the random variable $$\hat{N}_{t,k}=\sum_{i=1}^{E_s}N_{t-s,k}^{(i)},$$ which can be thought of as the total number of degree $k$ vertices at time $t-s$ in all the subgraphs descended from each edge of the graph at time $s$ when considered separately.  Then $$\E(e^{-2pt}\hat{N}_{t,k}|\salj_s)=W_s\E(e^{-2p(t-s)}N_{t-s,k}),$$ and we have \begin{eqnarray*}\Var\left(e^{-2pt}\hat{N}_{t,k}| \salj_s \right) &=& e^{-4ps}\Var\left(\sum_{i=1}^{E_s}e^{-2p(t-s)}N_{t-s,k}^{(i)}\right) \\ &\leq & e^{-4ps}[W_se^{2ps}+2e^{(2p+p^2)s}\tilde{S}_s]\Var(e^{-2p(t-s)}N_{t-s,k}),\end{eqnarray*} and as $e^{-2pt}N_{t,k}<e^{-2pt}E_t=W_t$ we know that $\Var(e^{-2p(t-s)}N_{t-s,k})$ is bounded as $t\to\infty$.  Hence, for any $u>0$ $\Var\left(e^{-2p(s+u)}\hat{N}_{s+u,k}| \salj_s \right)\to 0$ as $s\to\infty$, and hence $$\Var(e^{-2p(s+u)}\hat{N}_{s+u,k}-W_s\E(e^{-2pu}N_{u,k}))\to 0.$$

As we assume that $\Gamma_0$ consists of two vertices connected by a single edge, the initial degrees are $1$, so using Lemma \ref{randomvertex} and the first part of Proposition \ref{converge}, we have that $e^{(1-2p)t}\pr(\tilde{D}_t=k)\to m_k$ as $t\to\infty$.  As the number of vertices at time $t$ is Negative Binomial with parameters $2$ and $e^{-t}$, $\E(N_{t,k})=2e^t\pr(\tilde{D}_t=k)$ and so $e^{-2pt}\E(N_{t,k}) \to 2m_k$ as $t\to\infty$.  Using $W_s\to W$ in $L^2$ and $\E(e^{-2pu}N_{u,k}) \to 2m_k$ as $u\to\infty$, we have $e^{-2pt}\hat{N}_{t,k}\to 2m_kW$ in $L^2$ as $t\to\infty$.

It remains to show that $\hat{N}_{t,k}$ is close to $N_{t,k}$.  To do this, consider a vertex $v$ in $\Gamma_s$ with degree $j$, and consider starting a tracked vertex process from this vertex.  As well as the Markov chain $D_t$ which starts from $j$ at time $s$ giving the degree of the tracked vertex, we can also consider Markov chains $D_t^{(i)}$ which start from $1$ and whose values are the degree of the tracked vertex in the subgraph descended from edge $i$, where $i$ is one of the edges incident on $v$. Then Proposition \ref{converge} shows that $$\lim_{t\to\infty} \pr(D_t \geq 1)e^{(1-2p)(t-s)}=jm_k,$$ and that $$\lim_{t\to\infty} \pr(D_t^{(i)} \geq 1)e^{(1-2p)(t-s)}=m_k.$$  Hence the probability that more than one of the $D_t^{(i)}$ is positive is $o(e^{(1-2p)t})$ as $t\to\infty$, and hence $\E|\hat{N}_{t,k}-N_{t,k}|=o(e^{2pt})$.

We can apply the same argument to the total number of vertices with positive degree at time $t$, $\sum_{k=1}^{\infty} N_{t,k}$, showing that it converges in $L^1$ to $2\sum_{k=1}^{\infty}m_k W$.  Hence we can conclude that $e^{-2pt}\E(N_{t,k})$ converges in $L^1$ to $2m_kW$ as $t\to\infty$ and that $e^{-2pt}\sum_{k=1}^{\infty}\E(N_{t,k})$ converges in $L^1$ to $2W\sum_{k=1}^{\infty}m_k$ as $t\to\infty$; hence the proportion of vertices in the connected component converges to $m_k/\sum_{k=1}^{\infty}m_k=a_k$ in probability as $t\to\infty$.

Finally, if we start with a more general graph $\Gamma_0$ we can apply the above argument to the subgraphs descended from each edge, and use the same idea as above to obtain the behaviour of the graph as a whole.  This completes the proof.

\bibliographystyle{plain}
\bibliography{dupgraphs}

\end{document}